\theoremstyle{theorem}
\newtheorem{theorem}{Theorem}[section]
\newtheorem{proposition}[theorem]{Proposition}
\newtheorem{lemma}[theorem]{Lemma}
\newtheorem{question}[theorem]{Question}
\newtheorem{corollary}[theorem]{Corollary}
\theoremstyle{definition}
\newcommand{\Z}{\mathbb{Z}}
\newcommand{\Q}{\mathbb{Q}}
\newcommand{\bdy}{\partial}
\def\@seccntformat#1{%
  \protect\textup{\protect\@secnumfont
    \ifnum\pdfstrcmp{subsection}{#1}=0 \bfseries\fi
    \csname the#1\endcsname
    \protect\@secnumpunct
  }%
}  
\newtheorem*{rep@theorem}{\rep@title}
\newcommand{\newreptheorem}[2]{%
\newenvironment{rep#1}[1]{%
 \def\rep@title{#2 \ref{##1}}%
 \begin{rep@theorem}}%
 {\end{rep@theorem}}}
\begin{document}

\rhead{\thepage}
\lhead{\author}
\thispagestyle{empty}


\raggedbottom
\pagenumbering{arabic}
\setcounter{section}{0}


\title{ An infinite family of counterexamples to Batson's conjecture}
\date{\today}

\author{Vincent Longo}
\address{Department of Mathematics, University of Nebraska-Lincoln, Lincoln, NE 68588}
\email{vincent.longo@huskers.unl.edu}
\urladdr{https://www.math.unl.edu/~vlongo2/} 

%
%

\begin{abstract}
Batson's conjecture is a non-orientable version of Milnor's conjecture, which states that the 4-ball genus of a torus knot $T(p,q)$ is equal to $\frac{(p-1)(q-1)}{2}$. Batson's conjecture states that the nonorientable 4-ball genus is equal to the pinch number of a torus knot, i.e. the number of a specific type of (nonorientable) band surgeries needed to obtain the unknot. The conjecture was recently proved to be false by Lobb. We will show that Lobb's counterexample fits into an infinite family of counterexamples. \end{abstract}

\maketitle
\section{Introduction}\label{sec:outline}
Batson's conjecture is a nonorientable analogue of Milnor's conjecture about the (orientable) 4-ball genus of torus knots. The conjecture was recently proved to be false by Lobb in \cite{Lobb}. In this paper, we provide a new infinite family of counterexamples of the form $T(4n, (2n\pm1)^2)$. See Figure \ref{T825} for one such counterexample. 

Milnor's conjecture states that the unknotting number of the $(p,q)$ torus knot $T(p,q)$ is equal to $\frac{(p-1)(q-1)}{2}$. However, since the 4-ball genus $g_4(K)$ of a knot $K$ is a lower bound for the unknotting number of $K$, it suffices to show that $g_4(T(p,q))=\frac{(p-1)(q-1)}{2}$. This was eventually verified in \cite{Kronheimer1} and \cite{Kronheimer2} using powerful tools from gauge theory. It is natural to ask if there is a similar formulation for the nonorientable 4-ball genus $\gamma_4$ for torus knots. Indeed, in \cite{Batson}, it is shown that $\gamma_4(T(2k,2k-1))$ is equal to the \emph{pinch number} of $T(2k,2k-1)$, and conjectured that $\gamma_4(T(p,q))$ is always equal to the {pinch number} of $T(p,q)$.

    \begin{figure}
\includegraphics[width=1\textwidth]{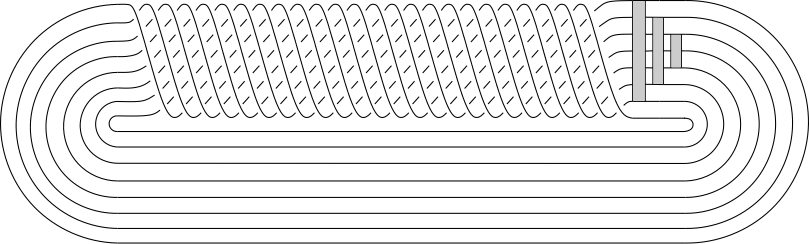}
  \caption{The knot $K_2=T(8,25)$ along with three orientation reversing bands. Surgery on these bands yields the slice knot $10_3$. Labeling the strands 1-8 from top to bottom, the bands attach strands 1 to 7, 2 to 6, and 3 to 5.}
  \label{T825}
\end{figure}

Batson's conjecture can be thought of in the following way: a pinch move on a torus knot is done by performing surgery on a (blackboard framed) band between two adjacent strands in a standard diagram of a torus knot (see Figure \ref{pinch}), called a pinch band. Note that a pinch band is necessarily nonorientable. Since the band embeds on the same torus as the torus knot that the band is attached to, then after performing the band surgery, the resulting knot still lives on a torus (and in fact is a less complicated torus knot). 
Thus, we can always find a sequence of pinch moves resulting in the unknot (called a \emph{pinch sequence}). Capping off the trace of this sequence of pinch moves yields a nonorientable surface bounded by the original torus knot. Batson's conjecture essentially asks whether there is any shorter sequence of non-orientable band surgeries taking a torus knot to the unknot (or more generally, a slice knot).

    \begin{figure}
\includegraphics[width=.7\textwidth]{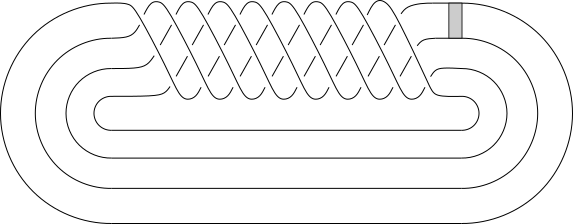}
  \caption{The knot $T(4,9)$ along with a pinch band. Since the band lies on the same torus that $T(4,9)$ lies on, surgery on this band yields another torus knot. Lobb showed that there is a single band surgery (not pictured) on $T(4,9)$ which yields Stevedore's knot, which is slice.}
  \label{pinch}
\end{figure}

Lobb disproved Batson's conjecture in \cite{Lobb} by showing that $T(4,9)$ has pinch number two and nonorientable 4-ball genus one. More specifically, Lobb found a nonorientable band surgery which takes $T(4,9)$ to Stevedore's knot, which is slice (whereas two pinch moves are needed to obtain the unknot). It was noted in \cite{Jabuka} that from this (or any other) counterexample, one can easily work backwards to find an infinite sequence of torus knots $\ldots \to T(p_n,q_n) \to T(p_{n-1},q_{n-1}) \to \ldots \to T(p_0,q_0)=T(4,9)$ where $T(p_i,q_i)\to T(p_{i-1},q_{i-1})$ denotes a pinch move taking $T(p_i,q_i)$ to $T(p_{i-1},q_{i-1})$. Then each $T(p_i,q_i)$ has pinch number $i+2$ and nonorientable 4-ball genus at most $i+1$. It is reasonable to wonder which other counterexamples exist which cannot be obtained in this way. In this paper, we provide a partial answer to this question by providing two new infinite families $\{K_n\}$ and $\{J_n\}$ of counterexamples to Batson's conjecture. 
%
%

\newtheorem*{thm:main}{Theorem \ref{thm:main}}
\begin{thm:main}
The torus knots $K_n=T(4n,(2n+1)^2)$ (for $n\geq 1$) and $J_n=T(4n, (2n-1)^2)$ (for $n\geq 2$) have pinch number $2n$ and nonorientable 4-ball genus at most $2n-1$. Specifically, there are $2n-1$ band surgeries for $K_n$ and $J_n$ (which are not all orientation preserving) that yield the slice knot with continued fraction expansion $[-(2n+2),-2n]$ and $[-(2n-2),-2n]$ respectively. 
\end{thm:main}

We note that Lobb's example $T(4,9)$ appears in these families as $K_1$. We also note that the knot $J_3=T(12,25)$ has a pinch sequence which passes through $T(4,9)$. In fact, we will show that for each $n>2$, four pinch moves applied to $J_n$ yields the knot $K_{n-2}$ (Corollary \ref{cor}). We note that $J_2=T(8,9)$ does not have a pinch sequence through any nontrivial $K_n$. We also show that no pinch sequence starting at any $K_n$ passes through any other $K_m$ (Corollary \ref{cor2}). The knot $K_2=T(8,25)$, which motivated these families of counterexamples, is shown in Figure \ref{T825}, along with three nonorientable band surgeries which yield the slice knot $10_3$ in the knot table.

  \section*{Acknowledgements}
  The author would like to thank his advisors Alex Zupan and Mark Brittenham for all their support, guidance, help, and suggestions throughout the process of this paper. The author would also like to thank Stanislav Jabuka and Cornelia A. Van Cott for the helpful email correspondence.

 \section{Preliminaries}


For a knot $K\subset S^3=\bdy B^4$, we define the 4-ball genus $g_4(K)$ to be the minimum genus of an orientable surface properly embedded in $B^4$ bounded by $K$. Similarly, we define the nonorientable 4-ball genus $\gamma_4(K)$ to be the minimum value of $b_1(\Sigma)$ for a nonorientable surface $\Sigma\subset B^4$ bounded by $K$, where $b_1(\Sigma)$ denotes the first Betti number of the surface $\Sigma$ (i.e. the rank of the first homology group of $\Sigma$).

One way to find upper bounds for $g_4(K)$ and $\gamma_4(K)$ for a knot $K$ is to find a set of band surgeries for $K$ that take $K$ to the unknot (or more generally, a slice knot). A sequence of band surgeries taking $K$ to $K'$ describes a cobordism $W$ between $K$ and $K'$, and if $K'$ bounds a disk in $B^4$, then we can cap off $W$ with that disk to obtain a surface $\Sigma$ in $B^4$ with $\bdy \Sigma=K$. The resulting surface $\Sigma$ is orientable if and only if each of the surgeries are performed on orientation preserving bands. 

A torus knot is a knot which embeds on a standardly embedded torus $T^2\subset S^3$. A pinch band $B$ for a torus knot $K\subset T^2$ is a band attached to $K$ that also embeds on the same torus $T^2$, and a pinch move is surgery on a pinch band. See Figure \ref{pinch}. Since a pinch band embeds on the same torus as $K$, then surgery along a pinch band results in another torus knot. A simple number-theoretic way to compute the resulting torus knot is described in \cite{Jabuka} (included in this paper as Lemma \ref{pinchlemma} for convenience), which can be used to compute the number of pinch moves needed to take a torus knot $K$ to the unknot. We call this the \emph{pinch number} of the torus knot $K$. We note that Lemma \ref{pinchlemma} also implies that the torus knot $K'$ obtained from a pinch move on a torus knot $K$ is unique, and hence there is a unique sequence of pinch moves from any given torus knot to the unknot. 

In a sense, Milnor's conjecture states that the surface realizing $g_4(T(p,q))$ is the most obvious one. Batson's conjecture essentially asks if the same is true for $\gamma_4(T(p,q))$: is $\gamma_4(T(p,q))$ equal to the pinch number of $T(p,q)$? 


%
%
%
%
%
%
%
A rational tangle $(\tau, B)$ is a pair of disjoint arcs $\tau=\alpha\cup\beta$ embedded in a 3-ball $B$ with $\bdy\tau=\bdy\tau\cap\bdy B=\{x_1, x_2, x_3, x_4\}$ for some distinct points $x_i\in\bdy B$ such that there exists an isotopy of $\tau$ rel boundary taking $\tau$ onto $\bdy B$. We can view the torus $T^2$ as a branched double cover of $\bdy B$ branched over the four points $\{x_1, x_2, x_3, x_4\}$. Each rational tangle $(\tau,B)$ can be associated to a fraction $p/q\in\Q\cup \left\{ 1/0 \right\}$ by isotoping $\tau$ onto $\bdy B$ and taking a lift of $\tau$ to a simple closed curve in $T^2$, which is canonically associated to a unique fraction $p/q\in\Q\cup\left\{1/0\right\}$. We sometimes use the notation $\tau\left(p/q \right)$ to denote the rational tangle with fraction $p/q$. 

Assume $\tau\subset \bdy B$ and identify $\bdy B$ with two unit squares $[0,1]\times[0,1]$ glued along their boundary so that $\bdy \tau$ is identified with the four corners, $\alpha=[0,1]\times\{0\}$ lifts to a longitude, and $\beta=\{0\}\times[0,1]$ lifts to a meridian. If $\tau$ cannot be isotoped into one of the unit squares (i.e. $p/q$ is not equal to $0/1$, $1/0$, or $\pm1/1$), then we can compute the associated fraction $p/q$ by counting intersections between $\tau$ with $\alpha$ and $\beta$ (after removing any bigons between $\tau$ and $\alpha$ or $\beta$). If $\tau\cap\alpha=n$ and $\tau\cap\beta=m$, then $p=n+1$ and $q=m+1$.

Any matrix $A\in SL(2,\Z)$ defines a homeomorphism of $T^2$ which takes a $p/q$ curve to an $r/s$ curve, where $A\cdot\begin{bmatrix} p \\ q \end{bmatrix}=\begin{bmatrix} r \\ s \end{bmatrix}$. For any such matrix $A$, there also exists a homeomorphism of $\bdy B\cong S^2$ fixing the four points $\bdy \tau$ setwise (we can think of this as a homeomorphism of the four punctured sphere) which extends to a homeomorphism of $B$ and takes a $p/q$ tangle to an $r/s$ tangle. Let $B_1\cup_S B_2$ be a genus 0 Heegaard splitting of $S^3$. Then $K$ is a 2-bridge knot if and only if $K$ can be isotoped to a knot $K'$ such that $(K'\cap B_i,B_i)$ is a rational tangle for each $i$. So $K'$ can be written as $K'=\tau\left(p_1/q_1\right)\cup\tau\left(p_2/q_2\right)$ for some $p_i/q_i\in\Q\cup\left\{1/0\right\}$. Then a matrix $A\in SL(2,\Z)$ defines a homeomorphism of $B_1\cup B_2$ taking $\tau\left(p_1/q_1\right)\cup\tau\left(p_2/q_2\right)$ to $\tau\left(r_1/s_1\right)\cup\tau\left(r_2/s_2\right)$ where  $A\cdot\begin{bmatrix} p_i \\ q_i \end{bmatrix}=\begin{bmatrix} r_i \\ s_i \end{bmatrix}$ for each $i$. So $K'$ is equivalent to the knot $\tau\left(r_1/s_1\right)\cup\tau\left(r_2/s_2\right)$. (See \cite{MappingClass} for more details.)

\section{The Counterexamples} 


Our counterexamples are given by the knots $K_n=T(4n, (2n+1)^2)$ and $J_n=T(4n,(2n-1)^2)$. The knot $K_1=T(4,9)$ is Lobb's example, and the knot $K_2=T(8,25)$ is shown in Figure \ref{T825}.

\begin{theorem}
\label{thm:main}
The torus knots $K_n=T(4n,(2n+1)^2)$ (for $n\geq 1$) and $J_n=T(4n, (2n-1)^2)$ (for $n\geq 2$) have pinch number $2n$ and nonorientable 4-ball genus at most $2n-1$. Specifically, there are $2n-1$ band surgeries for $K_n$ and $J_n$ (which are not all orientation preserving) that yield the slice knot with continued fraction expansion $[-(2n+2),-2n]$ and $[-(2n-2),-2n]$ respectively. 

\end{theorem}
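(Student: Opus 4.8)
The statement splits into two essentially independent halves, and I would handle them separately after recording one piece of bookkeeping. Suppose a knot $K$ admits $2n-1$ band surgeries to a slice knot $K'$. The induced cobordism $C$ from $K$ to $K'$ is built from $2n-1$ saddles, so $\chi(C)=-(2n-1)$; capping $K'$ with a slice disk yields a surface $\Sigma\subset B^4$ with $\bdy\Sigma=K$, one boundary component, and $\chi(\Sigma)=\chi(C)+1=2-2n$, whence $b_1(\Sigma)=1-\chi(\Sigma)=2n-1$. Because the bands are not all orientation preserving, $\Sigma$ is nonorientable, so $\gamma_4(K)\le 2n-1$. Thus the entire content of the genus bound is to produce the $2n-1$ bands and to verify that their common endpoint is slice.

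For the pinch number I would iterate Lemma \ref{pinchlemma}. Since that lemma shows the torus knot produced by a pinch move is unique, the pinch sequence down to the unknot is forced, so its length is well defined and I only need to count the steps of this descent. For $K_n=T(4n,(2n+1)^2)$ I would count the descent directly: note that $(2n+1)^2=4n(n+1)+1$, so $q\equiv 1\pmod{4n}$, and I would track how repeated applications of the formula reduce the pair $(p,q)$, showing the descent has length exactly $2n$ (Corollary \ref{cor2}, that no $K_m$ lies on this sequence, is consistent with the descent and guarantees there is no shortcut through the family). For $J_n=T(4n,(2n-1)^2)$ I would instead use the relation advertised in Corollary \ref{cor}, namely that four successive pinch moves carry $J_n$ to $K_{n-2}$, verified by checking each of the four intermediate torus knots against Lemma \ref{pinchlemma}; then $2n=4+2(n-2)$ propagates the count, with base cases $J_2=T(8,9)$ and $K_1=T(4,9)$ (Lobb's computation) checked by hand.

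The heart of the argument, and the step I expect to be hardest, is exhibiting the $2n-1$ bands and identifying the resulting knot. Drawing $T(4n,(2n\pm1)^2)$ as the closure of the braid $(\sigma_1\cdots\sigma_{4n-1})^{(2n\pm1)^2}$, I would use the folding bands suggested by Figure \ref{T825}: for $i=1,\dots,2n-1$ a band joining strand $i$ to strand $4n-i$, symmetric about the central strand $2n$. These saddles fold the braid onto itself, and after surgery the diagram should collapse to two-bridge position. To read off the fraction I would pass to the rational-tangle / branched-double-cover picture of the Preliminaries: each band move and each simplifying isotopy of the folded diagram is realized by an element of $SL(2,\Z)$ acting on the tangle slopes, and composing these matrices records the continued fraction of the output. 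The claim is that this computation yields exactly $[-(2n+2),-2n]$ for $K_n$ and $[-(2n-2),-2n]$ for $J_n$, which I would confirm by induction, using the four-step relation $J_n\rightsquigarrow K_{n-2}$ to reduce the tangle computation for large $n$ to the previous case and checking the base cases explicitly (the $n=1$ band on $T(4,9)$ being Lobb's). The delicate point is controlling the simplification of the folded braid precisely enough to extract the two tangle slopes without sign or off-by-one errors.

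Finally I would confirm the endpoints are slice. Evaluating $[a_1,a_2]$ as $a_1+1/a_2$, the continued fractions $[-(2n+2),-2n]$ and $[-(2n-2),-2n]$ give the two-bridge fractions $(2n+1)^2/2n$ and $(2n-1)^2/2n$; in particular the determinants are the perfect squares $(2n\pm1)^2$. Writing $m=2n+1$ and $m=2n-1$ respectively, these are the two-bridge knots $\mathfrak b(m^2,m-1)$ and $\mathfrak b(m^2,m+1)$, the classical family of ribbon two-bridge knots (Stevedore's knot $6_1$ is the case $m=3$, matching Lobb's endpoint up to mirror), hence slice. Combining the three ingredients—pinch number exactly $2n$, a nonorientable surface with $b_1=2n-1$, and a slice endpoint—gives the theorem, and in particular $\gamma_4(K)<$ pinch number, so each $K_n$ and $J_n$ is a counterexample to Batson's conjecture.
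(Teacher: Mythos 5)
Your skeleton matches the paper's in most respects: the Euler-characteristic bookkeeping giving $\gamma_4\le 2n-1$ from $2n-1$ not-all-orientable bands plus a slice disk, the iteration of Lemma \ref{pinchlemma} for the pinch number, the symmetric bands joining strand $i$ to strand $4n-i$, and the appeal to the classical sliceness of the 2-bridge knots $\mathfrak{b}(m^2,m\pm1)$ are all exactly the paper's ingredients. For the pinch count, what you leave as ``tracking the descent'' is carried out in the paper by a closed form: every knot in the descent is $T((4n-2k)(n\pm1)+1,\,4n-2k)$, for such a pair Lemma \ref{pinchlemma} gives $t=n\pm1$ and $h=1$, so each pinch move increases $k$ by one, and after $2n-1$ moves the knot $T(2(n\pm1)+1,2)$ is still nontrivial; your sketch is recoverable along these lines, and the paper handles $K_n$ and $J_n$ uniformly with the $\pm$ sign rather than treating $J_n$ by reduction to $K_{n-2}$ (in the paper, Corollary \ref{cor} is a consequence of this computation, not an input).

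The genuine gap is in the step you yourself flag as hardest: identifying the knot produced by the $2n-1$ folding bands. Your plan is to confirm the continued fractions ``by induction, using the four-step relation $J_n\rightsquigarrow K_{n-2}$ to reduce the tangle computation for large $n$ to the previous case.'' This cannot work. The four-pinch relation concerns pinch bands, which are a completely different set of bands from the folding bands; knowing that four pinch moves carry $J_n$ to $K_{n-2}$ gives no information about the result of surgering the $2n-1$ folding bands attached to $J_n$'s braid diagram, and there is no composition of cobordisms that converts one computation into the other. If you instead literally compose the four pinch bands with the $2(n-2)-1$ folding bands of $K_{n-2}$, you do get $2n-1$ bands, but they terminate at $K_{n-2}$'s slice endpoint $[-(2n-2),-(2n-4)]$, not at the knot $[-(2n-2),-2n]$ asserted in the theorem, and for $J_2$ this composite route needs four bands where the theorem promises three, so it fails there outright. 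The paper does not induct at all: it shows directly, uniformly in $n$, that after the band surgeries the diagram is a union of two rational tangles of slopes $\frac{1}{2m+1}$ (with $m=n\pm1$) and $\frac{2n}{2n-1}$, and then applies the single matrix $\begin{bmatrix} 1 & 0 \\ -2m-1 & 1 \end{bmatrix}$ to normalize the first tangle to $\frac{1}{0}$, exhibiting the result as the denominator closure of $\tau\left(\frac{2n}{-4n(n\pm1)-1}\right)$, whose continued fraction is $[-(2n\pm2),-2n]$. (Relatedly, your remark that ``each band move and each simplifying isotopy is realized by an element of $SL(2,\Z)$'' is off: band surgery is not a homeomorphism of the tangle decomposition; the $SL(2,\Z)$ action enters exactly once, after the surgered diagram has been put in 2-bridge position.) To repair the proposal you must carry out this direct tangle identification; the induction scaffolding should be discarded.
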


Theorem \ref{thm:main} is a direct consequence of Proposition \ref{2} and Proposition \ref{3}, which we will now state and prove. 

%
%
%
%

\begin{proposition}
\label{2}

A torus knot of the form $T(4n, (2n\pm 1)^2)$ has pinch number $2n$ (with the exception of $T(4,1)$, which is unknotted). 
\end{proposition}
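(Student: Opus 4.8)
The plan is to compute the pinch sequence explicitly by iterating the number-theoretic pinch formula of Lemma \ref{pinchlemma} and to isolate an invariant of the sequence that forces it to have length exactly $2n$. After arranging the smaller parameter first, Lemma \ref{pinchlemma} expresses a single pinch move as $T(p,q)\mapsto T(p-2,\,q-2\lfloor q/p\rfloor)$, so a pinch decreases the number of strands by two; since the pinch sequence from a torus knot to the unknot is unique, the pinch number is just the number of moves in this sequence, and my task reduces to counting them.

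The observation I would record at the outset is the arithmetic identity
\[
(2n+1)^2 = (n+1)\cdot 4n + 1, \qquad (2n-1)^2 = (n-1)\cdot 4n + 1,
\]
which says that in both families the second parameter is congruent to $1$ modulo the first, with quotient $c=n+1$ for $K_n$ and $c=n-1$ for $J_n$. I would then prove by induction on $k$ that the $k$-th knot in the pinch sequence is
\[
T\bigl(4n-2k,\; c(4n-2k)+1\bigr), \qquad k=0,1,\ldots,2n.
\]
The inductive step is immediate once the base identity is in place: if $q=cp+1$ with $p\ge 2$, then $\lfloor q/p\rfloor=c$ (the remainder $1$ being strictly less than $p$), so Lemma \ref{pinchlemma} sends $T(p,cp+1)$ to $T(p-2,\,cp+1-2c)=T(p-2,\,c(p-2)+1)$, preserving the form. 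Thus the relation $q=cp+1$ is precisely the invariant propagated along the sequence.

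With this closed form the count is straightforward. For $0\le k\le 2n-1$ the parameters are $p_k=4n-2k\ge 2$ and $q_k=c\,p_k+1\ge 3$, with $\gcd(p_k,q_k)=\gcd(p_k,1)=1$; hence every intermediate knot is a genuine, nontrivial torus knot and the sequence cannot terminate early. At $k=2n$ one reaches $T(0,1)$, the unknot. This yields pinch number exactly $2n$. The only degenerate case is $c=0$, i.e. $J_1=T(4,1)$, where $q=1$ already and the knot is unknotted; this is the advertised exception, and it is what forces the restriction $n\ge 2$ for the family $J_n$.

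The only real subtleties — and where I would be most careful — are the edge conditions guaranteeing that Lemma \ref{pinchlemma} applies in the stated form at every stage: that we are always pinching the smaller parameter (which holds since $q_k=c\,p_k+1>p_k$ whenever $c\ge 1$), that $p_k\ge 2$ so the floor evaluates to exactly $c$, and that the quotient $c$ stays positive. Verifying that $\lfloor q_k/p_k\rfloor$ remains constant along the whole sequence, rather than drifting, is the crux of the argument, and it is exactly the identity $q_k=c\,p_k+1$ that makes this transparent.
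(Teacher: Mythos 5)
Your overall strategy---identifying the closed form $q = cp+1$ as an invariant preserved by pinch moves, inducting on it, and counting $2n$ steps down to $T(0,1)$ with a check that no intermediate knot is trivial---is exactly the structure of the paper's proof (the paper writes the same invariant as $(2n\pm1)^2 - 2k(n\pm1) = (4n-2k)(n\pm1)+1$). But there is a genuine gap at the crux. Your unconditional restatement of Lemma \ref{pinchlemma} as $T(p,q) \mapsto T\bigl(p-2,\, q - 2\lfloor q/p \rfloor\bigr)$ is false: the lemma computes the new parameters from modular inverses, not from the integer quotient, and the two do not agree in general. For example, for $T(5,7)$ one has $t \equiv -7^{-1} \equiv 2 \pmod 5$ and $h \equiv 5^{-1} \equiv 3 \pmod 7$, so a single pinch yields $T(|5-4|,|7-6|) = T(1,1)$, the unknot, whereas your formula predicts the nontrivial knot $T(3,5)$. (For the same reason, "a pinch decreases the number of strands by two" is false in general.)

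What saves your argument is that the floor formula does hold in the only situation you apply it, namely $q = cp+1$: there $q \equiv 1 \pmod p$ gives $t = p-1$, hence $|p-2t| = p-2$, and $p\cdot(-c) \equiv 1 \pmod q$ gives $h = q-c$, hence $|q-2h| = q-2c = c(p-2)+1$. But this congruence computation---which is precisely the content of the paper's proof, where it appears as $t = n\pm1$ and $h = 1$ with the roles of the two parameters interchanged---appears nowhere in your write-up; you instead invoke the false general principle. As it stands, the inductive step is asserted rather than proved, and the "crux" you flag (that $\lfloor q_k/p_k \rfloor$ stays constant) is not where the real work lies: the work is in translating the modular-inverse description of the pinch move into your closed form. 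Once that two-line computation is inserted, the remainder of your proposal (nontriviality of the intermediate knots $T(p_k, cp_k+1)$ for $k \leq 2n-1$, the count of exactly $2n$ moves, and the exceptional case $J_1 = T(4,1)$ corresponding to $c=0$) is correct and matches the paper.
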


To prove the proposition, we use the following lemma from \cite{Jabuka}:

\begin{lemma}
\label{pinchlemma}
A pinch move applied to a torus knot $T(p,q)$ yields the torus knot $T(|p-2t|, |q-2h|)$, where $t$ and $h$ are the smallest nonnegative integers satisfying $t\equiv -q^{-1}\mod p$ and $h\equiv p^{-1}\mod q$. 
\end{lemma}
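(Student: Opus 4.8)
The plan is to work on the flat torus $T^2=\R^2/\Z^2$, where the torus knot $T(p,q)$ is the image of the line through the origin of slope $q/p$, i.e. the closed curve $\gamma(x)=(px,qx)$ for $x\in[0,1]$, with homology class $p\lambda+q\mu$ ($\lambda$ horizontal, $\mu$ vertical). First I would record how $\gamma$ meets the meridian $\mu=\{x=0\}$: the $p$ intersection points sit at heights $0,1/p,\dots,(p-1)/p$, and the point at height $j/p$ is the $k$-th point visited along $\gamma$, where $k\equiv q^{-1}j \pmod p$. Consequently two \emph{spatially adjacent} strands, at heights $j/p$ and $(j+1)/p$, are visited at parameter-indices $a,b$ with $b-a\equiv q^{-1}\pmod p$; the analogous statement for the longitude $\lambda=\{y=0\}$ produces the residue $p^{-1}\pmod q$. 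This is where the modular inverses in the statement enter.

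Next I would analyze the pinch move itself as an orientation-reversing saddle. Cutting $\gamma$ at the two adjacent strand points splits it into the arc $A_1$ joining them (parameter interval $[a/p,b/p]$) and the complementary arc $A_2$. Tracking the four ends created by the band surgery along the short band between the strands, I would check that the (necessarily nonorientable) pinch band reconnects them so that $K'=A_1\cup\overline{A_2}$: one traverses $A_1$ forward, crosses the band, and returns along $A_2$ \emph{reversed}, closing up into a single component. Establishing that this particular smoothing --- rather than the oriented resolution, which would give a two-component link --- is the one realized by the blackboard-framed band on the torus is the first delicate point, and is exactly the geometric content behind ``a pinch band is necessarily nonorientable.''

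I would then compute $[K']\in H_1(T^2)$ by winding numbers. Using $[A_1]+[A_2]=p\lambda+q\mu$ and that $\overline{A_2}$ contributes with the opposite sign, the horizontal and vertical windings of $K'$ come out to $p'=2(b-a)-p$ and $q'=2s-q$, where $s=(q(b-a)-1)/p\in\Z$; the only subtlety is the bookkeeping of the two short band arcs that close $K'$ up, which contribute nothing to the integer windings. A convenient sanity check is the identity $p'q-pq'=\pm2$, which one verifies directly and which pins down $(p',q')$ together with the range constraints. Finally I would translate into the stated form: since $b-a\equiv q^{-1}\pmod p$ the residue $t\equiv -q^{-1}\equiv -(b-a)\pmod p$ satisfies $|p-2t|=|2(b-a)-p|=|p'|$, while $ps\equiv-1\pmod q$ gives $s\equiv-p^{-1}\equiv -h\pmod q$ and hence $|q-2h|=|q'|$; connectivity of $K'$ forces $\gcd(p',q')=1$, so $K'=T(|p'|,|q'|)=T(|p-2t|,|q-2h|)$. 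I expect the main obstacle to be the honest bookkeeping in this last homology computation --- keeping the signs, the reversed arc, and the closing band arcs straight, and checking independence of the choice of adjacent pair $j$ (different choices replace $b-a$ by $p-(b-a)$ and $s$ by its counterpart, leaving $|p'|,|q'|$ unchanged).
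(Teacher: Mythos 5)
Nothing to compare on the paper's side: the paper does not prove Lemma~\ref{pinchlemma} at all --- it is quoted verbatim from \cite{Jabuka}, and the text around it only records the $(p,q)\leftrightarrow(q,p)$ symmetry. So your proposal should be judged on its own, and as a proof plan it is correct; moreover it is essentially the same homological computation that underlies the original source. Your key steps all check out: the $k$-th meridian crossing of $\gamma(x)=(px,qx)$ sits at height $(qk\bmod p)/p$, so adjacent strands are visited at indices differing by $d\equiv q^{-1}\pmod p$; with $t=p-d$ one gets $p-2t=2d-p$, matching your $p'$; the integrality $s=(qd-1)/p\in\Z$ follows from $qd\equiv 1\pmod p$; and $ps=qd-1\equiv -1\pmod q$ gives $s\equiv -p^{-1}\equiv -h\pmod q$, and since $s\in[0,q-1]$ in fact $s=q-h$, so $2s-q=q-2h$ exactly (your sanity identity comes out as $p'q-pq'=+2$ with these conventions, and the parity argument $p'\equiv p$, $q'\equiv q\pmod 2$ plus coprimality of $p,q$ forces $\gcd(p',q')=1$). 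The one point you rightly flag as delicate --- that surgery on the torus-embedded band is the turnback smoothing $A_1\cup\overline{A_2}$ rather than the oriented resolution --- is immediate from the definition the paper uses (a pinch band \emph{embeds on the same torus}, so the surgered curve lies on $T^2$ and the replacement arcs are forced to be the two turnbacks inside the band square), and it simultaneously explains why the result is a single component: reconnecting two arcs of a knot in the crossed pattern always yields one circle, whereas the coherent pattern would yield two. The only refinements worth making when writing this up are (i) doing the fractional bookkeeping explicitly --- $A_1$ winds $(d,\,s+\tfrac1p)$, $A_2$ winds $(p-d,\,q-s-\tfrac1p)$, and the two short band arcs contribute $(0,-\tfrac2p)$, so $[K']=(2d-p)\lambda+(2s-q)\mu$ on the nose --- and (ii) noting that your computation actually produces the signed class $(-(p-2t),\,q-2h)$, which is stronger than the unsigned statement; the paper's later discussion of the \emph{sign} of a pinch move (Section 4) relies on exactly this signed refinement.
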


Note that if a pinch move on $T(p,q)$ yields the knot $T(r,s)=T(|p-2t|,|q-2h|)$, then a pinch move on $T(q,p)$ yields the knot $T(s,r)$: Indeed, if $t$ and $h$ are the smallest nonnegative integers satisfying $t\equiv -q^{-1}\mod p$ and $h\equiv p^{-1}\mod q$ and $t'$ and $h'$ are the smallest nonnegative integers satisfying $t'\equiv q^{-1}\mod p$ and $h'\equiv -p^{-1}\mod q$, then $t'=p-t$ and $h'=q-h$. Then $(p-2t)+(p-2t')=(p-2t)+(p-2(p-t))=0$, so $p-2t=-(p-2t')$. Similarly, $q-2h=-(q-2h')$. We get that a pinch move on $T(q,p)$ yields the knot $T(|q-2h'|,|p-2t'|)=T(|q-2h|,|p-2t|)=T(s,r)$. We are now ready to prove Proposition 3.2.

\begin{proof}

We first prove a slightly stronger statement: a pinch move applied to a torus knot $T(p,q)=T((2n\pm1)^2-2k(n\pm1),4n-2k)$ yields the torus knot $T((2n\pm1)^2-2(k+1)(n\pm1),4n-2(k+1))$ (note that we are interchanging the roles of $p$ and $q$ before applying the pinch move, which does not affect the resulting knot). Note that $(2n\pm1)^2-2k(n\pm1)$ can be rewritten as $(4n-2k)(n\pm 1)+1$. 

We first compute $t=-(4n-2k)^{-1} \mod (4n-2k)(n\pm1)+1$. Since $(4n-2k)\cdot (n\pm 1) \equiv -1 \mod (4n-2k)(n\pm1)+1$, then $t=n\pm 1$. Then $$|p-2t|=|(4n-2k)(n\pm 1)+1-2(n\pm1)|=(4n-2(k+1))(n\pm 1)+1.$$

Next we compute $h=((4n-2k)(n\pm 1)+1)^{-1} \mod 4n-2k$. Since $(4n-2k)(n\pm 1)+1\equiv 1 \mod 4n-2k$, then $h=1$. Then $|q-2h|=|(4n-2k)-2|=4n-2(k+1)$. Thus, a pinch move applied to a torus knot $T(p,q)=T((2n\pm1)^2-2k(n\pm1),4n-2k)$ yields the torus knot $T((2n\pm1)^2-2(k+1)(n\pm1),4n-2(k+1))$. 

Now, we repeatedly apply $2n$ pinch moves to the knot $T((2n\pm1)^2,4n)$ to get the knot $T((4n-2(2n))(n\pm1)+1,4n-2(2n))=T(1,0),$ the unknot. Note that if only $2n-1$ pinch moves are applied, we still have a nontrivial knot $$T((4n-2(2n-1))(n\pm1)+1,4n-2(2n-1))=T(2(n\pm1)+1, 2)$$ (with the exception of $T(2(1-1)+1,2)=T(1,2)$). Thus, the pinch number of $T((2n\pm1)^2,4n)$ is $2n$.

\end{proof}
%
With the previous lemma in mind, the following tables show a sequence of pinch sequences for $K_n$ and $J_n$ respectively. For clarity, we abbreviate all pinch moves $T(p,q)\to T(r,s)$ by $(p,q)\to (r,s)$.

\begin{table}[h]
\setlength{\tabcolsep}{1pt}
\begin{tabular}{cccccccccccccccccccccccc}
$K_1=(4,9)$&$\to$ &$(2,5)$&$\to$ &$(0,1)$\\
$K_2=(8,25)$&$\to$ &$(6,19)$&$\to$ &$(4,13)$&$\to$ &$(2,7)$&$\to$ &$(0,1$)\\
$K_3=(12,49)$&$\to$ &$(10,41)$&$\to$ &$(8,33)$&$\to$ &$(6,25)$&$\to$ &$(4,17)$&$\to$ &$(2,9)$&$\to$ &$(0,1)$\\
$K_4=(16,81)$&$\to$ &$(14,71)$&$\to$ &$(12,61)$&$\to$ &$(10,51)$&$\to$ &$(8,41)$&$\to$ &$(6,31)$&$\to$ &$(4,21)$&$\to$ &$(2,11)$&$\to$ &$(0,1)$\\
$K_5=(20,121)$&$\to$&$(18,109)$&$\to$&$(16,97)$&$\to$ &$(14,85)$&$\to$ &$(12,73)$&$\to$ &$(10,61)$&$\to$ &$(8,49)$&$\to$ &$\ldots$& &\\
&&&&&&&&$\ldots$& $\to$ &(6,37) &$\to$ & $(4,25)$&$\to$ &$(2,13)$&$\to$ &$(0,1)$\\
\end{tabular}
\caption{\label{tab:KnPinches}Pinch sequences for $K_n=T(4n,(2n+1)^2)$.}
\end{table}

\begin{table}[h]
\setlength{\tabcolsep}{1pt}
\begin{tabular}{cccccccccccccccccccccccc}
$J_2=(8,9)$&$\to$ &$(6,7)$&$\to$ &$(4,5)$&$\to$ &$(2,3)$&$\to$ &$(0,1$)\\
$J_3=(12,25)$&$\to$ &$(10,21)$&$\to$ &$(8,17)$&$\to$ &$(6,13)$&$\to$ &$(4,9)$&$\to$ &$(2,5)$&$\to$ &$(0,1)$\\
$J_4=(16,49)$&$\to$ &$(14,43)$&$\to$ &$(12,37)$&$\to$ &$(10,31)$&$\to$ &$(8,25)$&$\to$ &$(6,19)$&$\to$ &$(4,13)$&$\to$ &$(2,7)$&$\to$ &$(0,1)$\\
$J_5=(20,81)$&$\to$&$(18,73)$&$\to$&$(16,65)$&$\to$ &$(14,57)$&$\to$ &$(12,49)$&$\to$ &$(10,41)$&$\to$ &$(8,33)$&$\to$ &$\ldots$& &\\
&&&&&&&&$\ldots$& $\to$ &(6,25) &$\to$ & $(4,17)$&$\to$ &$(2,9)$&$\to$ &$(0,1)$\\
\end{tabular}
\caption{\label{tab:JnPinches}Pinch sequences for $J_n=T(4n,(2n-1)^2)$.}
\end{table}

The reader may notice from the tables that after four pinch moves to each $J_n$, we obtain the knot $K_{n-2}$ (where $K_0=T(0,1)$). It also seems that the pinch sequence starting at each $K_n$ does not pass through any other $K_m$. Indeed, both of these statements follow as a corollary to (the proof of) Proposition \ref{2}.
%
%
%
%
%
%
%
%

\begin{corollary}
\label{cor}
Applying a sequence of four pinch moves to $J_n$ yields the knot $K_{n-2}$. 
\end{corollary}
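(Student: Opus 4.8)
The plan is to read the result directly off the \emph{stronger} statement established in the proof of Proposition \ref{2}. There, taking the $-$ sign throughout, we showed that a single pinch move sends
$$T\big((2n-1)^2 - 2k(n-1),\, 4n-2k\big) \longmapsto T\big((2n-1)^2 - 2(k+1)(n-1),\, 4n-2(k+1)\big).$$
Since $J_n=T(4n,(2n-1)^2)=T((2n-1)^2,4n)$ is precisely the $k=0$ member of this family, I would simply iterate the displayed formula four times, advancing $k$ from $0$ to $4$. This immediately gives that four pinch moves carry $J_n$ to the knot $T\big((2n-1)^2 - 8(n-1),\, 4n-8\big)$, with the validity of each intermediate step already guaranteed by Proposition \ref{2} (equivalently Lemma \ref{pinchlemma}).

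The remaining step is a one-line algebraic identification of this knot with $K_{n-2}$. First I would compute the second parameter, $4n-8=4(n-2)$. Then I would expand the first parameter, $(2n-1)^2-8(n-1)=4n^2-12n+9=(2n-3)^2=(2(n-2)+1)^2$. Hence the resulting knot is $T\big(4(n-2),(2(n-2)+1)^2\big)=K_{n-2}$, after using $T(p,q)=T(q,p)$ to match the parameter conventions.

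The only point requiring care is the handling of edge cases, and this is where I would focus whatever attention the argument needs. For $n\geq 2$ the four intermediate knots in the family all have positive parameters, so the pinch computation of Proposition \ref{2} applies at each stage; and when $n=2$ the terminal knot is $T((2n-3)^2,4n-8)=T(1,0)$, the unknot, which is consistent with the convention $K_0=T(0,1)$ adopted in the remark preceding the corollary. Because the iteration is already justified and the algebra is entirely forced, I do not anticipate any genuine obstacle: the whole proof reduces to matching parameters and verifying the single identity $(2n-1)^2-8(n-1)=(2n-3)^2$.
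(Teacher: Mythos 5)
Your proposal is correct and matches the paper's own proof essentially verbatim: both read the iterated pinch formula off the stronger statement in the proof of Proposition \ref{2} (your $(2n-1)^2-8(n-1)$ is the paper's $(4n-2\cdot 4)(n-1)+1$, which are equal by the rewriting noted there) and conclude with the same identity $4n^2-12n+9=(2(n-2)+1)^2$. Your extra attention to the $n=2$ edge case is a small bonus the paper leaves implicit.
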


\begin{proof}

It can be seen from the proof of Proposition \ref{2} that applying $4$ pinch moves to the knot $J_n=T(4n,(2n-1)^2)$ yields the knot $T(4n-2\cdot4,(4n-2\cdot 4)(n-1)+1)$. Since $$(4n-2\cdot 4)(n-1)+1=4n^2-12n+9=(2(n-2)+1)^2$$ then a sequence of four pinch moves to $J_n$ yields the knot $T(4(n-2),(2(n-2)+1)^2)=K_{n-2}$.

\end{proof}

As noted earlier, Jabuka and Van Cott mentioned that infinitely many counterexamples to Batson's conjecture can be found by working backwards and finding a pinch sequence that passes through $T(4,9)$. In a sense, Corollary \ref{cor} says that all of the counterexamples $J_n$ can be obtained from $K_{n-2}$ by working backwards in the same way, with the exception of $J_2=T(8,9)$ (since $K_0$ is unknotted). The next corollary to Proposition \ref{2} says that the family $\{K_n\}$ consists of knots which cannot be obtained from each other in the same way. 

\begin{corollary}
\label{cor2}
Each nontrivial $K_n$ can not be obtained from any other $K_m$ from a sequence of pinch moves. 
\end{corollary}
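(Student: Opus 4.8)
The plan is to combine the uniqueness of pinch sequences with the explicit list of intermediate knots produced in the proof of Proposition \ref{2}. Recall from the Preliminaries (the remark following Lemma \ref{pinchlemma}) that the torus knot resulting from a pinch move is unique, so every torus knot admits a single pinch sequence down to the unknot. Consequently, saying that $K_n$ can be obtained from $K_m$ by a sequence of pinch moves is the same as saying that $K_n$ occurs as one of the intermediate knots in the unique pinch sequence of $K_m$. So it suffices to show that $K_n$ never appears in the pinch sequence of $K_m$ when $m\neq n$.

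First I would record exactly which knots occur in that sequence. By the computation in the proof of Proposition \ref{2} (using the $+$ sign, and recalling the identity $(2m+1)^2-2k(m+1)=(4m-2k)(m+1)+1$), the pinch sequence of $K_m=T(4m,(2m+1)^2)$ passes precisely through the knots
\[
T\bigl(4m-2k,\,(4m-2k)(m+1)+1\bigr),\qquad k=0,1,\dots,2m,
\]
where $k=0$ recovers $K_m$ and $k=2m$ gives the unknot $T(0,1)$.

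Next I would determine when $K_n=T(4n,(2n+1)^2)=T\bigl(4n,\,4n(n+1)+1\bigr)$ can coincide with one of these knots. Two positive torus knots agree exactly when their unordered pairs of parameters agree, so I would compare coordinates. Every knot in the list above has an even first parameter $4m-2k$ and an odd second parameter $(4m-2k)(m+1)+1$, and $K_n$ likewise has even parameter $4n$ and odd parameter $4n(n+1)+1$; a parity check therefore forces even to match even and odd to match odd. Matching the even parameters gives $4n=4m-2k$, i.e. $k=2(m-n)$, which in particular requires $m\geq n$. Matching the odd parameters then yields $4n(n+1)+1=4n(m+1)+1$, and since $n\geq 1$ this simplifies to $m=n$.

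Hence no nontrivial $K_n$ can appear in the pinch sequence of any $K_m$ with $m\neq n$, which is exactly the assertion of the corollary. The only step that requires any care is the torus-knot identification in the third paragraph, where one must rule out a spurious match arising from the reordering freedom $T(a,b)=T(b,a)$; I expect this to be the sole delicate point, but it is dispatched immediately by the parity observation, since an even parameter can never equal an odd one. Everything else is just the bookkeeping already established in Proposition \ref{2}.
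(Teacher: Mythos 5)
Your proposal is correct and follows essentially the same route as the paper: both identify the knots in the pinch sequence of $K_m$ as $T(4m-2k,(4m-2k)(m+1)+1)$ via the computation in Proposition \ref{2}, match parameters against $K_n=T(4n,4n(n+1)+1)$, and reduce to an algebraic identity forcing $m=n$. Your additional parity check (ruling out a crossed match under $T(a,b)=T(b,a)$) and explicit appeal to uniqueness of pinch sequences are small points of rigor the paper leaves implicit, but the argument is the same.
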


\begin{proof}

If $K_n$ is obtained from some other $K_m$ from a sequence of $\ell$ pinch moves, the proof of Proposition \ref{2} also shows that $K_n=T(4n,4n(n+1)+1)$ can be written in the form $$T(4m-2\ell,(4m-2\ell)(m+1)+1).$$  

Then $4m-2\ell=4n$ for some $\ell\in\Z$. This implies $\ell$ is even, so we can instead write $\ell=2k$ and say that $4m-4k=4n$ or $m-k=n$ for some $k\in\Z$. 


We also have the equation $$(4m-4k)(m+1)+1=4n(n+1)+1.$$ This implies 

\begin{align*}
n(n+k+1)&=n(n+1)\\
n^2+nk+n&=n^2+n\\
nk&=0.
\end{align*}

Since $n\neq 0$, then $k=0$, $\ell=0$, and $n=m$. So each $K_n$ can not be obtained from $K_m$ from a sequence of pinch moves. 

%

\end{proof}

We now know that for each $n$, $K_n$ and $J_n$ have pinch number $2n$ (except for the unknotted $J_1$). Next we will show that the nonorientable 4-ball genus of each $K_n$ and $J_n$ is bounded above by $2n-1$ by finding a set of $2n-1$ band surgeries for $K_n$ and $J_n$ that yield the unknot. Any given band is an orientation reversing band for $K_n$ or $J_n$, hence the set of band surgeries describes a nonorientable surface $\Sigma$ bounded by $K_n$ or $J_n$ with $b_1(\Sigma)=2n-1$.

\begin{proposition}
\label{3}

There is a set of $2n-1$ bands for the knot $T(4n, (2n\pm1)^2)$ such that surgery on the set of bands yields the unknot. 
\end{proposition}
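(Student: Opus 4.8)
The plan is to exhibit the $2n-1$ bands explicitly and then to identify the knot produced by performing all of the surgeries simultaneously, showing it to be the unknot. First I would fix the standard closed-braid diagram of $T(4n,(2n\pm1)^2)$ on $4n$ strands, labelled $1,\dots,4n$ from top to bottom, and attach the band joining strand $i$ to strand $4n-i$ for each $i=1,\dots,2n-1$. This directly generalizes the three bands drawn for $K_2=T(8,25)$ in Figure~\ref{T825}, which join $1$ to $7$, $2$ to $6$, and $3$ to $5$. Exactly $2n-1$ bands are produced, strands $2n$ and $4n$ being left unpaired, and each band connects two strands carrying opposite orientations in the Seifert-oriented diagram, so every band is orientation reversing. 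Hence the capped-off cobordism is nonorientable and, once the terminal knot is shown to be the unknot, this yields the bound $\gamma_4\bigl(T(4n,(2n\pm1)^2)\bigr)\le 2n-1$ asserted in Theorem~\ref{thm:main}.

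Second, I would identify the surgered knot using the rational-tangle framework of the Preliminaries. The chosen bands are symmetric under the involution reflecting the diagram about the central strand $2n$, so the result of the simultaneous surgery inherits this symmetry; the fusions $\{i,4n-i\}$ collapse the $4n$-strand braid onto a single rational tangle, and decomposing $S^3=B_1\cup_S B_2$ as before exhibits the result as a two-bridge knot $\tau(p_1/q_1)\cup\tau(p_2/q_2)$. I would then compute the associated fraction, either by counting intersections of the tangle arcs with the longitude $\alpha$ and meridian $\beta$ as in the Preliminaries, or by recording the surgeries as a product of elementary $SL(2,\Z)$ tangle replacements acting on $\begin{bmatrix} p \\ q \end{bmatrix}$, and verify that the fraction so obtained is that of the unknot.

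The main obstacle is precisely this identification, performed uniformly in $n$. Two checks require care: that the $2n-1$ bands are pairwise disjoint and embedded on the given diagram, so that the simultaneous surgery is well defined; and that the reduction to a single rational tangle together with the fraction computation genuinely produces the trivial knot for every $n$, not merely in the small cases that can be drawn by hand. I expect the most efficient bookkeeping to come from an induction on $n$: performing the two outermost bands (joining $1$ to $4n-1$ and $2$ to $4n-2$) should carry the surgered diagram for $T(4n,(2n\pm1)^2)$ onto the surgered diagram for the next smaller member of the family, reducing the general claim to a single base case. Proving that this reduction is an honest isotopy of surgered diagrams—rather than a mere coincidence of terminal knot types—is the step I anticipate being the most delicate.
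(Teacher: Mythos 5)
Your band placement and overall framework coincide with the paper's: the same $2n-1$ blackboard-framed bands joining strand $k$ to strand $4n-k$, followed by a two-bridge decomposition of the surgered knot and a rational-tangle fraction computation. But the terminal step of your plan is aimed at the wrong target and would fail if carried out: surgery on these bands does \emph{not} yield the unknot. The paper's computation identifies the two tangles as having slopes $\frac{1}{2m+1}$ (with $m=n\pm1$) and $\frac{2n}{2n-1}$, and after applying $A=\begin{bmatrix} 1 & 0 \\ -2m-1 & 1 \end{bmatrix}$ the result is the two-bridge knot with fraction $\frac{2n}{-4n(n\pm1)-1}$, i.e.\ continued fraction expansion $[-(2n\pm2),-2n]$. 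Since the numerator is $2n>1$, this is a nontrivial knot for every $n$: for $K_1=T(4,9)$ it is Stevedore's knot, and for $K_2=T(8,25)$ it is $10_3$. So your proposed verification ``that the fraction so obtained is that of the unknot'' cannot be completed. The missing idea is sliceness: the knots $[-(2n\pm2),-2n]$ belong to the family shown to be slice by Casson, Gordon, and Conway (cited via Siebenman, with Lisca's classification as an alternative), and one caps off the band cobordism with the slice disk rather than with a disk bounded by an unknot. (The proposition's literal wording ``yields the unknot'' is loose; Theorem \ref{thm:main} states the correct conclusion, namely that the surgeries yield a slice knot, which is all the genus bound $\gamma_4\le 2n-1$ requires.)

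Two smaller points. First, your proposed induction on $n$ --- that surgering the two outermost bands carries the diagram for $T(4n,(2n\pm1)^2)$ onto the surgered diagram for the next member of the family --- is doubtful as stated: those surgeries do not change the number $m=n\pm1$ of full twists in the braid, whereas the next family member has a different twist count, so the reduction is not an isotopy of surgered diagrams. The paper avoids induction entirely by a single isotopy uniform in $n$ (pulling the $2n-1$ semicircles through the $m$ full twists) and then counting intersections of each tangle with the horizontal reference arcs ($2n-1$ and $2n-2$ points) to read off the slope $\frac{2n}{2n-1}$ directly. Second, your disjointness concern is handled in the paper simply by exhibiting the bands in the standard braid diagram (Figure \ref{braidwithbands}); nothing subtle is needed there.
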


\begin{proof}

We consider $T(4n,(2n\pm1)^2)$ as the closure of a braid on $4n$ strands. Label the strands from 1 to $4n$ reading left to right. The braid has $m=n\pm1$ full twists, as well as strand $4n$ crossing over strands $4n-1$ through 1. We attach blackboard framed bands from strand $k$ to $4n-k$ for each $k=1, \ldots 2n-1$. The situation is depicted in Figure \ref{braidwithbands}. After performing surgery on the bands, we can isotope the braid to have two sets of $2n-1$ semicircles opposite each other. We can pull the bottom half of the semicircles through the $m$ full twists to obtain the diagram in Figure \ref{afterbands}.

\begin{figure}[h]
\centering
\begin{minipage}[b]{.5\textwidth}
  \centering
  \includegraphics[width=.4\linewidth]{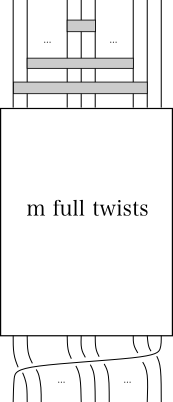}
  \captionof{figure}{A braid whose closure is $T(4n, (2n\pm1)^2)$, along with $2n-1$ bands.}
  \label{braidwithbands}
\end{minipage}%
\begin{minipage}[b]{.5\textwidth}
  \centering
  \includegraphics[width=.4\linewidth]{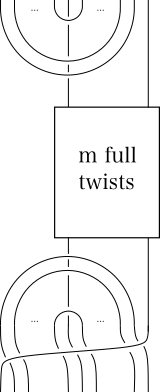}
  \captionof{figure}{We obtain this figure after performing band surgery and an isotopy.}
  \label{afterbands}
\end{minipage}
\end{figure}

    \begin{figure}
\includegraphics[width=.3\textwidth]{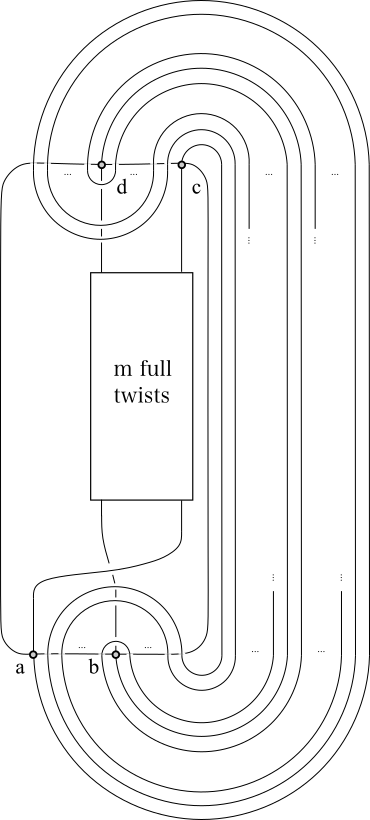}
  \caption{The knot $K_2=T(8,25)$ along with three orientation reversing bands. Surgery on these bands yields the slice knot $10_3$ with continued fraction expansion $[4,6]$. Labeling the strands 1-8 from top to bottom, the bands attach strands 1 to 7, 2 to 6, and 3 to 5.}
  \label{closure}
\end{figure}

After pulling the bottom half of the semicircles underneath the last strand (and looking at the braid closure of our previous diagram), we obtain the knot diagram in Figure \ref{closure}. Note that this is a 2-bridge knot, as there is a 2-sphere $S$ (also shown in Figure \ref{closure}) which intersects the knot in the four points labeled $a, b, c$, and $d$ and splits the knot into two trivial tangles.  Recall that there is a correspondence between 2-strand trivial tangles and rational numbers (as described above). The first trivial tangle consists of $m$ full twists and an additional half twist, i.e. $2m+1$ half twists. We can compute the slope of this first tangle to be $\frac{1}{2m+1}$, since we can isotope it onto a unit square $[0,1]\times[0,1]$ such that the tangle intersects the square in its four corners along with $2m$ intersection points on the left and right sides of the square.

For the second tangle, we first isotope the tangle by pulling the points labeled $c$ and $d$ clockwise around the knot to obtain the picture in Figure \ref{outsidebox1}. Now, if we draw arcs horizontally from $a$ to $b$, $b$ to $c$, $c$ to $d$, and $d$ to $a$ (moving right, passing the point at infinity, then appearing from the left side of the page), we can count intersection points of these arcs with the tangle. In fact, the number of intersection points in Figure \ref{outsidebox1} between the tangle and any of the four arcs is equal to $2n-1$, exactly the number of bands we performed surgery on. We can isotope the tangle to remove one intersection point between the tangle and the arcs from $b$ to $c$ and from $d$ to $a$, shown in Figure \ref{outsidebox2}. Since the tangle intersects the arcs from $a$ to $b$ and $c$ to $d$ in $2n-1$ points and the arcs from $b$ to $c$ and $d$ to $a$ in $2n-2$ points, the slope of this tangle is $\frac{2n}{2n-1}$ (see \cite{Zieschang} for more details and exposition on this construction of a rational tangle).

We now have that our 2-bridge knot is the union of tangles whose rational slopes are $\frac{1}{2m+1}$ (where $m=n\pm1$) and $\frac{2n}{2n-1}$. That is, we can write $K$ as $\tau\hspace{-1mm}\left(\frac{1}{2m+1}\right)\cup\tau\hspace{-1mm}\left(\frac{2n}{2n-1}\right)$.
 Consider the matrix $A=\begin{bmatrix} 1 & 0 \\ -2m-1 & 1 \end{bmatrix}\in SL(2,\Z)$.

\begin{figure}
\centering
\begin{minipage}[b]{.5\textwidth}
  \centering
  \includegraphics[width=1\linewidth]{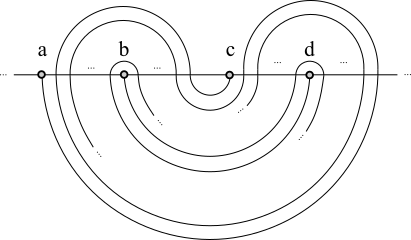}
  \captionof{figure}{The knot in Figure \ref{closure} is a union of two tangles. We isotope the second tangle by pulling the points labeled $c$ and $d$ counterclockwise around the knot to obtain this diagram.}
  \label{outsidebox1}
\end{minipage}%
\begin{minipage}[b]{.5\textwidth}
  \centering
  \includegraphics[width=1\linewidth]{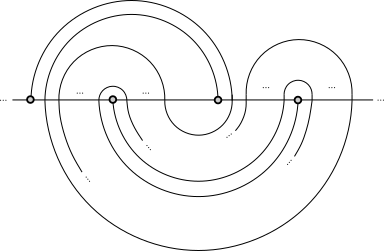}
  \captionof{figure}{After an isotopy, the tangle intersects the horizontal arcs from $a$ to $b$ and $c$ to $d$ in $2n-1$ points and the arcs from $b$ to $c$ and $d$ to $a$ in $2n-2$ points.}
  \label{outsidebox2}
\end{minipage}
\end{figure}

Observe that $\begin{bmatrix} 1 & 0 \\ -2m-1 & 1 \end{bmatrix} \begin{bmatrix} 1\\ 2m+1 \end{bmatrix}=\begin{bmatrix} 1 \\ 0 \end{bmatrix} $ and 
\begin{align*}
\begin{bmatrix} 1 & 0 \\ -2m-1 & 1 \end{bmatrix} \begin{bmatrix} 2n\\ 2n-1 \end{bmatrix}&=\begin{bmatrix} 1 & 0 \\ -2(n\pm1)-1 & 1 \end{bmatrix} \begin{bmatrix} 2n\\ 2n-1 \end{bmatrix} \\
&=\begin{bmatrix} 2n \\ -4n(n\pm1)-1 \end{bmatrix}.
\end{align*}
Then, from the discussion in section 2 (see also \cite{MappingClass}), $K$ is equivalent to the knot $\tau\hspace{-1mm}\left(\frac{1}{0}\right)\cup\tau\hspace{-1mm}\left(\frac{2n}{-4n(n\pm 1)-1}\right)$, also sometimes referred to as the denominator closure of the rational tangle $\tau\hspace{-1mm}\left(\frac{2n}{-4n(n\pm1)-1}\right)$ (see \cite{Kauffman}). We can compute the continued fraction expansion as $$\frac{2n}{-4n(n\pm1)-1}=\frac{1}{\frac{-4n(n\pm1)-1}{2n}}=\frac{1}{-(2n\pm 2)+\frac{1}{-2n}}.$$ Siebenman states in \cite{Siebenman} that Casson, Gordon, and Conway showed all knots of this form are slice (see also \cite{Lisca} for a classification of slice 2-bridge knots). For convenience, see Figure \ref{sliceband} to see a slice band for the knot with continued fraction expansion $[n+2,n]$.


\begin{figure}
\centering
\begin{subfigure}{.5\textwidth}
  \centering
  \includegraphics[width=.5\linewidth]{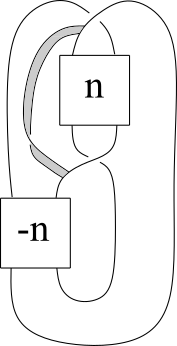}
  \label{a}
\end{subfigure}%
\begin{subfigure}{.5\textwidth}
  \centering
  \includegraphics[width=.5\linewidth]{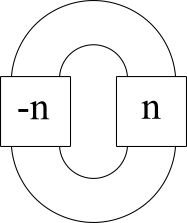}
  \label{b}
\end{subfigure}
\caption{A slice band for the knot with continued fraction expansion $[n+2,n]$. After the band surgery, we obtain the two component unlink.}
\label{sliceband}
\end{figure}

\end{proof}

%
%
%

  \section{Questions}


As noted earlier, Milnor's conjecture asserts that the most efficient way to obtain $g_4$ for a torus knot is also the nicest or most obvious way. The pinch bands which motivated Batson's conjecture were in a sense the nicest band surgeries one can find to obtain the unknot. However, the bands found for our families of counterexamples also have a nice symmetric property to them. Are there counterexamples that do not have a certain amount of symmetry to them? Or is this the best that we can do? We ask the following.

\begin{question}
\label{q1}
Are there other examples of torus knots with nonorientable 4-ball genus less than its pinch number?
\end{question}
While we expect the answer to this question to be yes, it would be significant if the answer were no. Presently, all known counterexamples to Batson's conjecture have nonorientable 4-ball genus only one less than the pinch number, hence showing the answer to Question \ref{q1} is no would prove that Batson's conjecture was only off by one - that is, all torus knots have nonorientable 4-ball genus equal to either its pinch number or its pinch number minus one. Should the answer to Question \ref{q1} be yes, it is natural to wonder how large the difference between the pinch number and nonorientable 4-ball genus can be.


\begin{question}
Can the difference between the pinch number and the nonorientable 4-ball genus of a torus knot be arbitrarily large?
\end{question}

Batson verified his conjecture for $T(2k,2k-1)$ using a lower bound for $\gamma_4$. Osv\'ath, Stipsicz and Szab\'o provided another lower bound $\nu-\frac{1}{2}\sigma$ for $\gamma_4$ in \cite{OSS}, and Jabuka and Van Cott gave a combinatorial way to compute this lower bound in certain cases in \cite{Jabuka}. In particular, Jabuka and Van Cott in \cite{Jabuka} gave a categorization of when $\nu-\frac{1}{2}\sigma$ is equal to the pinch number minus one for a torus knot. The \emph{sign} of a pinch move $T(p,q)\to T(r,s)$ is the sign of the value $p-2t$ or $q-2h$, where $t$ and $h$ are the smallest nonnegative integers such that $t\equiv -q^{-1}\mod p$ and $h\equiv p^{-1}\mod q$. Let $p,q>1$ be relatively prime with $q$ odd, and let $T(p,q)=T(p_n,q_n)\to T(p_{n-1},q_{n-1})\to\ldots \to T(p_0,q_0)=U$ (with $q_0=1$) be a sequence of pinch moves from $T(p,q)$ to the unknot $U$. Jabuka and Van Cott proved that if $p$ is even, then $\nu(T(p,q))-\frac{1}{2}\sigma(T(p,q))=n-1$ if and only if there is exactly one index $k$ such that the sign of the pinch move $T(p_k,q_k)\to T(p_{k-1},q_{k-1})$ is negative (Proposition 5.1(b) of \cite{Jabuka}). From the proof of Lemma \ref{pinchlemma}, we see that the sign of every pinch move $T((2n\pm1)^2-2k(n\pm1),4n-2k)\to T((2n\pm1)^2-2(k+1)(n\pm1),4n-2(k+1))$ is equal to the sign of $p-2t=4n-2(k+1))(n\pm 1)+1$, which is positive (except for when $p-2t=0$, in which case $q-2h$ can still be seen to be positive). Following the discussion prior to Lemma \ref{pinchlemma}, reversing the roles of $p$ and $q$ (since to apply Jabuka and Van Cott's result, we require $q$ to be odd), we see that the sign of every pinch move $T(4n-2k,(2n\pm1)^2-2k(n\pm1))\to T(4n-2(k+1),(2n\pm1)^2-2(k+1)(n\pm1))$ is negative. Thus, $(\nu-\frac{1}{2}\sigma)(K_n)< 2n-1$ for each $n>0$, and $(\nu-\frac{1}{2}\sigma)(J_n)< 2n-1$ for each $n>1$ (the exception $n=1$ occurs since $J_1$ is already unknotted). It would be interesting to see if other lower bounds can be used to compute the exact value of $\gamma_4$ for $K_n$ and $J_n$. We hence ask the following. 
\begin{question}
Is the nonorientable 4-ball genus of $K_n$ and $J_n$ actually equal to $2n-1$?
\end{question}

\bibliographystyle{amsalpha}
\bibliography{vince}

 \end{document}